\def\N{\mathbb N}
\def\A{\mathcal A}
\def\uu{\mathbf{u}}
\theoremstyle{definition}
\newtheorem{definition}{Definition}
\newtheorem{corollary}[definition]{Corollary}
\newtheorem{remark}[definition]{Remark}
\newtheorem{example}[definition]{Example}
\theoremstyle{plain}
\newtheorem{theorem}[definition]{Theorem}
\begin{document}
\begin{frontmatter}
\title{String attractors of episturmian sequences}

\author[cvut]{L\!'ubom\'ira Dvo\v r\'akov\'a}
\ead{lubomira.dvorakova@fjfi.cvut.cz}
\address[cvut]{FNSPE Czech Technical University in Prague, Czech Republic}

\begin{abstract}
In this paper, we describe string attractors of all factors of episturmian sequences and show that their size is equal to the number of distinct letters contained in the factor.  
\end{abstract}

\begin{keyword}
episturmian sequences \sep Sturmian sequences \sep string attractors 

\MSC 68R15
\end{keyword}
\end{frontmatter}

\section{Introduction}
String attractors play an important role in the field of data compression. 
They have been recently defined and studied by Kempa and Prezza~\citep{KempaPrezza2018}: a {\em string attractor} of a finite word $w=w_0w_1\dots w_n$, where $w_i$ are letters, is a subset $\Gamma$ of $\{0,1,\dots, n\}$ such that each factor of $w$ has an occurrence containing an element of $\Gamma$. 

In this paper we consider string attractors from the point of view of combinatorics on words. 
Several results have already been reached in this aspect. Basic combinatorial properties of string attractors (e.g., string attractors of powers, conjugates, etc.) were studied by Mantaci et al.~\citep{Mantaci2021}.  
Moreover, the authors showed that finite standard Sturmian words have an attractor of size 2 containing two consecutive positions and that the size of the smallest string attractor of de Bruijn words grows asymptotically as $\frac{n}{\log n}$, where $n$ is the length of the word. The smallest string attractor (of size 4) of a particular factor subset of the Thue-Morse sequence was determined in~\citep{Kutsukake2020}.
Schaeffer and Shallit~\citep{Shallit2021} considered it more natural to study string attractors of prefixes of infinite sequences instead of particular classes of finite factors of those sequences. This was then formalized in terms of {\em string attractor profile function} in~\citep{Restivo2022}.
The classical notions in combinatorics on words that somehow measure repetitiveness are the factor complexity and the recurrence function.
The string attractor profile function builds a bridge between them, as described in~\citep{Restivo2022}. Its behaviour was studied for linearly recurrent sequences and for automatic sequences in~\citep{Shallit2021}. Ibidem, the authors determined the values of string attractor profile function for the period-doubling sequence, the Thue-Morse sequence, the Tribonacci sequence, and the powers of two sequence (see also~\citep{Kociumaka2021}).
The string attractor profile function of standard Sturmian sequences was determined and its properties for fixed points of morphisms were studied in~\citep{Restivo2022}.

In this paper we determine string attractors of all factors of episturmian sequences. In a preliminary paper~\citep{Mantaci2019}, the authors announced the form of string attractor of circularly balanced epistandard words and promised to detail the proof in the full version (using palindromic closures). However to our knowledge, the full version does not contain such results. Our construction of attractors of factors of episturmian sequences is very simple, it is also based on palindromic closures. The size of the obtained string attractors of factors of episturmian sequences is the smallest possible, it equals the number of distinct letters contained in the factor. It provides string attractors for all factors of Sturmian sequences unlike the construction in~\citep{Mantaci2021}, which works only for finite standard Sturmian words. Our attractors are different because they do not contain consecutive positions in general. It is a straightforward consequence of our result that the string attractor profile function is eventually constant for any episturmian sequence.

\section{Preliminaries}
\label{Section_Preliminaries}
An \textit{alphabet} $\A$ is a finite set of symbols called \textit{letters}.
A \textit{word} over $\A$ of \textit{length} $n$ is a string $u = u_0 u_1 \cdots u_{n-1}$, where $u_i \in \A$ for all $i \in \{0,1, \ldots, n-1\}$. We let $|u|$ denote the length of $u$ and $\overline{u}=u_{n-1}\cdots u_1 u_0$. If $u=\overline{u}$, then $u$ is called a \textit{palindrome}.
The set of all finite words over $\A$ together with the operation of concatenation forms a monoid, denoted $\A^*$.
Its neutral element is the \textit{empty word} $\varepsilon$ and we write $\A^+ = \A^* \setminus \{\varepsilon\}$.
If $u = xyz$ for some $x,y,z \in \A^*$, then $x$ is a \textit{prefix} of $u$, $z$ is a \textit{suffix} of $u$ and $y$ is a \textit{factor} of $u$.

A \textit{sequence} over $\A$ is an infinite string $\uu = u_0 u_1 u_2 \cdots$, where $u_i \in \A$ for all $i \in \N$. 
We always denote sequences by bold letters. 

A sequence $\uu$ is \textit{eventually periodic} if $\uu = vwww \cdots = v(w)^\omega$ for some $v \in \A^*$ and $w \in \A^+$. If $\uu$ is not eventually periodic, then it is \textit{aperiodic}.
A \textit{factor} of $\uu = u_0 u_1 u_2 \cdots$ is a word $y$ such that $y = u_i u_{i+1} u_{i+2} \cdots u_{j-1}$ for some $i, j \in \N$, $i \leq j$. If $i=j$, then $y=\varepsilon$.
In the context of string attractors, the set $\{i, i+1,\dots, j-1\}$ is called an \textit{occurrence} of the factor $y$ in $\uu$. (Usually, only the number $i$ is called an occurrence of $y$ in $\uu$.)
If $i=0$, the factor $y$ is a \textit{prefix} of $\uu$.
If each factor of $\uu$ occurs at least twice in $\uu$, the sequence $\uu$ is \textit{recurrent}.

The \textit{language} $\mathcal{L}(\uu)$ of a sequence $\uu$ is the set of all its factors.
${\mathcal L}(\uu)$ is \textit{closed under reversal} if for each factor $w$ of $\uu$, the language ${\mathcal L}(\uu)$ contains also $\overline{w}$. 
A factor $w$ of $\uu$ is \textit{left special} if $aw, bw$ are in $\mathcal{L}(\uu)$ for at least two distinct letters $a,b \in \A$. The \textit{factor complexity} of a sequence $\uu$ is the mapping $\mathcal{C}_\uu: \N \to \N$ defined by
$\mathcal{C}_\uu(n) = \# \{w \in \mathcal{L}(\uu) : |w| =  n \}$.
The factor complexity of an aperiodic sequence $\uu$ satisfies $\mathcal{C}_{\uu}(n) \ge n+1$ for all $n \in \N$. The aperiodic sequences with the lowest possible factor complexity $\mathcal{C}_{\uu}(n) = n+1$ are called \textit{Sturmian sequences}. Clearly, all Sturmian sequences are defined over a binary alphabet, e.g., $\{ {\tt 0,1} \}$.

Kempa and Prezza~\citep{KempaPrezza2018} introduced the notion of string attractor.  
A {\em string attractor} (or {\em attractor} for short) of a word $w=w_0w_1\cdots w_{n-1}$, where $w_i\in \mathcal A$, is a set $\Gamma \subset \{0,1,\dots, n-1\}$ such that every factor of $w$ has an occurrence containing at least one element of $\Gamma$. For instance, $\Gamma=\{1,3\}$ is an attractor of $w=\tt 0\underline{1}0\underline{0}10$ (it corresponds to the underlined positions). It is the smallest possible attractor since each attractor necessarily contains occurrences of all distinct letters of the factor.

\section{Palindromic closures and episturmian sequences}
\label{sec:sturmian}
\begin{definition}
Let $w$ be a word and $a$ a letter, then $(wa)^{(+)}$ is the shortest palindrome having $wa$ as prefix. 
\end{definition}
It follows immediately from the definition that $(wa)^{(+)}=wa\overline{w}$ if $w$ does not contain $a$. Otherwise, $(wa)^{(+)}=vp\overline{v}$, where $w=vp$ and $p$ is the longest palindromic suffix of $w$ preceded by $a$.
\begin{example}
Let $w=\tt 000$, then $(w{\tt 0})^{(+)}=w\tt 0=\tt 0000$ and $(w{\tt 1})^{(+)}=w{\tt 1}w=\tt 0001000$.
For $v=\tt 01101$, we have $(v{\tt 0})^{(+)}=\tt 011010110$ and $(v{\tt 1})^{(+)}=\tt 0110110$.
\end{example}
\begin{definition}\label{def:palclosure}
Let $\Delta=\delta_0\delta_1\delta_2\cdots$ with $\delta_i \in \mathcal A$ and define $w_0=\varepsilon$ and
$w_{n+1}=(w_{n}\delta_{n})^{(+)}$ for all $n\in \mathbb N$. Then we denote $\uu(\Delta)=\lim_{n\to \infty} w_n$, i.e., $\uu(\Delta)$ is a unique sequence having $w_n$ as prefix for each $n \in \mathbb N$, and we call $\Delta$ the \textit{directive sequence} of $\uu(\Delta)$.
\end{definition}
 
\begin{definition}
Let $\uu$ be a sequence whose language is closed under reversal and such that for each length $n$ it contains at most one left special factor. Then $\uu$ is called an \textit{episturmian sequence}. 
An episturmian sequence is \textit{standard} if all left special factors are prefixes. 
\end{definition}
Sturmian sequences correspond to aperiodic binary episturmian sequences.
It is well-known that for each episturmian sequence there exists a unique standard episturmian sequence with the same language. Since we are interested in the language of episturmian sequences, it suffices to consider only standard episturmian sequences~\citep{DrJuPi2001}. 
For the study of attractors, the construction of episturmian sequences by palindromic closures seems to be handy. It was introduced by Droubay, Justin and Pirillo~\citep{DrJuPi2001}.

\begin{theorem} 
Let $\uu$ be a standard episturmian sequence over $\mathcal A$. Then $\uu=\uu(\Delta)$ for a unique sequence $\Delta=\delta_0\delta_1\delta_2\cdots$ with $\delta_i \in \mathcal A$.
\end{theorem}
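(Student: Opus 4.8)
The plan is to read off the directive sequence $\Delta$ from the palindromic prefixes of $\uu$. Let $p_0=\varepsilon$ and, assuming $p_n$ is a palindromic prefix of $\uu$, let $\delta_n=u_{|p_n|}$ be the letter of $\uu$ immediately following $p_n$, so that $p_n\delta_n$ is a prefix of $\uu$. The \textbf{key claim} is that $(p_n\delta_n)^{(+)}$ is again a prefix of $\uu$; calling it $p_{n+1}$ we obtain a new palindromic prefix, and the construction iterates. Since $(p_n\delta_n)^{(+)}$ has $p_n\delta_n$ as a proper prefix we have $|p_{n+1}|>|p_n|$, so the lengths increase without bound and no separate argument for the existence of arbitrarily long palindromic prefixes is needed. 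Setting $\Delta=\delta_0\delta_1\delta_2\cdots$, the words $w_n$ of Definition~\ref{def:palclosure} satisfy $w_n=p_n$ by induction, whence $\uu(\Delta)=\lim_{n\to\infty}w_n=\uu$. This reduces existence to the key claim.

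For the key claim, fix a palindromic prefix $p$ with following letter $a$, put $w=pa$, let $\ell$ be the longest palindromic suffix of $w$, and write $w=r\ell$, so $(w)^{(+)}=r\ell\overline r$. Because $p$ is a palindrome, $\overline w=ap$, and $\ell$ is simultaneously the longest palindromic prefix of $ap$; writing $ap=\ell\overline r$ gives the convenient form $(w)^{(+)}=r\ell\overline r=pa\,\overline r$, so the claim asks precisely that the $|r|$ letters following the prefix $pa$ be $\overline r$. The engine here is that $\uu$ is \emph{rich}: every prefix of $\uu$ has a \emph{unioccurrent} longest palindromic suffix. Granting this, $\ell$ occurs in $w$ only as a suffix, and completing $w$ to the shortest palindrome cannot create an earlier occurrence; tracking the longest palindromic suffix letter by letter as one reads $\uu$ past $pa$ then shows that the first position at which a palindromic prefix longer than $p$ can close is exactly $|pa\,\overline r|=|(pa)^{(+)}|$, forcing $(pa)^{(+)}$ to be that prefix.

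\textbf{The main obstacle} is thus establishing richness (the unioccurrence property) from the defining hypotheses. This is where closure under reversal and the uniqueness of the left special factor of each length are both used: reversal closure turns the unique left special factor of length $m$ (a prefix, by standardness) into the unique right special factor of that length, namely its reversal, and an overlap/counting argument on these special factors shows that a longest palindromic suffix cannot recur earlier without producing a second left special factor of some length. I expect this step, rather than the bookkeeping around palindromic closures, to require the most care.

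Finally, uniqueness of $\Delta$ is clean and uses only the minimality built into palindromic closure. If $\Delta'=\delta_0'\delta_1'\cdots$ also satisfies $\uu(\Delta')=\uu$ with associated prefixes $w_n'$, then each $w_n'$ is a palindromic prefix of $\uu$, and no palindromic prefix lies strictly between $w_n'$ and $w_{n+1}'$: any such prefix would be a palindrome having $w_n'\delta_n'$ as a prefix yet shorter than $(w_n'\delta_n')^{(+)}$, contradicting the definition of palindromic closure. Hence the $w_n'$ are exactly the palindromic prefixes of $\uu$ listed by increasing length, so they coincide with the $p_n$ above, and $\delta_n'=u_{|w_n'|}=\delta_n$ for every $n$. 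Therefore $\Delta$ is uniquely determined by $\uu$.
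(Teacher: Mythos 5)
The paper contains no proof of this theorem to compare against: it is quoted as a known result of Droubay, Justin and Pirillo \citep{DrJuPi2001}. So your proposal has to stand on its own. Its skeleton is the standard one for this theorem, and two of its three parts are sound: reading $\Delta$ off the increasing chain of palindromic prefixes (with $w_n=p_n$ by induction) is the right construction, and your uniqueness argument is complete and correct — by minimality of the palindromic closure, no palindromic prefix of $\uu$ can lie strictly between $w_n'$ and $(w_n'\delta_n')^{(+)}$, so the $w_n'$ must enumerate all palindromic prefixes and $\Delta'$ is forced.

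The genuine gap is in the key claim, which is where the entire content of the theorem sits. You reduce it to richness (every prefix has a unioccurrent longest palindromic suffix), with closure under reversal and recurrence in the background, and then gesture at ``tracking the longest palindromic suffix letter by letter.'' This factorization of the proof cannot work, because richness together with reversal closure and recurrence is strictly weaker than the key claim. Concretely, complementation-symmetric Rote sequences — partial sums mod $2$ of Sturmian sequences — are known to be rich, uniformly recurrent, and to have language closed under reversal, yet the key claim fails for them: for $\mathbf{r}=0011100111000\cdots$ (partial sums mod $2$ of the Fibonacci sequence), the palindromic prefix $00$ is followed by the letter $1$, but $(001)^{(+)}=00100$ is not a prefix of $\mathbf{r}$; the next palindromic prefix is $0011100$. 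Hence the hypothesis that $\uu$ has at most one left special factor of each length must be used \emph{inside} the proof of the key claim, not merely (as in your plan) to establish richness — a Rote sequence has two left special factors of each length, and that is precisely the property your sketch never exploits beyond the richness step. On top of this, your argument for richness itself is only a gesture (``an overlap/counting argument''), which you yourself flag as the hard part. So the existence half of the theorem rests on an unproved lemma combined with a reduction that is false as stated; this is where a correct proof (as in \citep{DrJuPi2001}) has to do real work with the episturmian hypothesis.
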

    In the sequel we will always assume without loss of generality that $\Delta$ is defined over ${\mathcal A}=\{\tt 0,1,\dots, d-1\}$ and the first letter of $\Delta$ is $\tt 0$, the second distinct letter in $\Delta$ is $\tt 1$, etc.

\begin{example}
\label{ex:FiboDef}
The most famous standard Sturmian sequence is the \textit{Fibonacci sequence}
$$
\mathbf f = \mathbf f(\Delta)\,,
$$
where $\Delta=(\tt 01)^{\omega}$.
The first six prefixes of $\mathbf f$ read:
$$\begin{array}{rcll}
  w_0&=&\varepsilon \\
  w_1&=&\tt 0\\
  w_2&=&\tt 010 \\
  w_3&=&\tt 010010\\
  w_4&=&\tt 01001010010\\
  w_5&=&\tt 0100101001001010010\,.
\end{array}$$
\end{example}

\section{Attractors of episturmian sequences}
In this section we determine attractors of all factors of episturmian sequences. We start with attractors of palindromic prefixes of standard episturmian sequences.

\begin{theorem}\label{thm:standardAR}
Let $v$ be a non-empty palindromic prefix of a standard episturmian sequence. 
For every letter $a$ occurring in $v$,  denote 
$$m_a=\max\{|p| \, : \, p \text{\  is a palindrome  and }  pa \text{ is a prefix of} \ v\}.$$
Then $\Gamma = \{m_a: a \text{ occurs in  } v\} $ is an attractor of $v$ and its size is minimal.
\end{theorem}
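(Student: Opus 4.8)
The plan is to prove two things: (i) $\Gamma = \{m_a : a \text{ occurs in } v\}$ is an attractor, and (ii) its size is minimal. The minimality is the easy half: as noted in the Preliminaries, any attractor must contain, for each distinct letter appearing in the factor, at least one position carrying an occurrence of that letter, so $|\Gamma| \ge \#\{\text{distinct letters in } v\}$. Since the positions $m_a$ are indexed by distinct letters and (as I will check) are pairwise distinct, $|\Gamma|$ equals exactly the number of distinct letters, hence is minimal.

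For the attractor property itself, **the key idea** is to exploit the palindromic/iterated-closure structure that generates $v$. Let me set up notation. Writing $v = u_0 u_1 \cdots u_{N-1}$, the definition of $m_a$ says that $u_{m_a} = a$: indeed $m_a = |p|$ where $pa$ is a prefix, so position $m_a$ is exactly the letter $a$ sitting right after the palindrome $p$. So each element of $\Gamma$ is a genuine position in $v$ carrying the corresponding letter. I would first record that these positions are distinct (different letters $a$ give different $m_a$ because the letter at position $m_a$ is $a$), settling the size claim.

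**The heart** of the argument is to show every factor of $v$ has an occurrence hitting some $m_a$. The natural route is induction along the directive sequence, i.e., along the chain of palindromic prefixes $w_0, w_1, w_2, \dots$ with $v = w_n$ for some $n$. The crucial observation is that the palindromic closure construction creates new factors in a controlled way: when we pass from $w_k$ to $w_{k+1} = (w_k \delta_k)^{(+)}$, the freshly introduced content sits around the position just after the longest palindromic prefix, which is precisely one of the $m_a$'s. Concretely, I expect that $m_{\delta_k}$ (for the relevant letters) marks the "center of novelty" at each step. So the induction would maintain the hypothesis that $\Gamma \cap \{0,\dots,|w_k|-1\}$ is an attractor of $w_k$, and the inductive step must show that any factor of $w_{k+1}$ not already a factor of (a reversed copy of) $w_k$ must cross one of these central positions. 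Here the reversal-closure of the language and the palindromic symmetry of $v$ are what let me transport an occurrence of a given factor to an occurrence straddling the correct $m_a$: if a factor occurs, its reversal occurs too, and by symmetry about a palindrome center one of the two occurrences can be slid onto the marked position.

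**The main obstacle** I anticipate is precisely this "sliding onto the marked position" step — making rigorous the claim that every factor has an occurrence covering some $m_a$. The subtlety is that $m_a$ is defined via the \emph{longest} palindrome followed by $a$, and I must argue that a short factor appearing deep inside $v$, far from any $m_a$, nonetheless reappears (by recurrence/palindromicity) in a window containing an $m_a$. I would handle this by a careful analysis of the bispecial/palindromic structure: every factor of a standard episturmian sequence occurs as a factor of some palindromic prefix $w_k$, and within $w_k$ the central region around $u_{m_{\delta_{k-1}}}$ is where all "new" factors of length up to the relevant bound first complete. Linking "first occurrence of a factor" to "an occurrence through $\Gamma$" via the palindromic mirror symmetry is the delicate point where I would spend most of the effort.
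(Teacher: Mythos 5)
Your minimality argument is correct (each $m_a$ is a position of $v$ carrying the letter $a$, so the positions are pairwise distinct, and any attractor must contain an occurrence of every letter), and your overall strategy --- induction along the chain of palindromic prefixes $w_0, w_1, \dots, w_n = v$ --- is the same as the paper's. However, the induction invariant you propose is false, and the step you yourself flag as the ``main obstacle'' is precisely the idea that is missing, so there is a genuine gap. Concretely, you propose to maintain that $\Gamma \cap \{0,\dots,|w_k|-1\}$ is an attractor of $w_k$, where $\Gamma$ is the attractor of the final word $v$. Take the Tribonacci directive sequence $\Delta=(\mathtt{012})^{\omega}$ and $v = w_4 = \mathtt{01020100102010}$: the theorem gives $\Gamma = \{7,1,3\}$, while $w_3 = \mathtt{0102010}$ has attractor $\{0,1,3\}$. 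Then $\Gamma \cap \{0,\dots,6\} = \{1,3\}$ is not an attractor of $w_3$ --- it does not even contain an occurrence of the letter $\mathtt{0}$. The point your plan misses is that the attractor positions are not stable along the chain: when passing from $w_k$ to $w_{k+1} = (w_k\delta_k)^{(+)}$, the position associated with the letter $\delta_k$ \emph{moves} to $|w_k|$, so the attractor changes by replacing one element rather than only growing (the paper's example states this explicitly). The induction must therefore be run with the attractor defined intrinsically for each $w_k$ by the theorem's own formula, not with the restriction of the final $\Gamma$.

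Once the invariant is corrected, the crux (which you leave unresolved) is to reconcile the attractor of $w_{k+1}$ with that of $w_k$: every factor of $w_{k+1}$ either crosses the new position $|w_k|$ or lies inside the prefix copy of $w_k$, but in the latter case the inductive hypothesis only yields an occurrence crossing the \emph{old} attractor of $w_k$, which may hit exactly the position that has since moved. The paper's repair mechanism is not the reversal ``sliding'' you sketch --- mirror symmetry of a palindrome maps an occurrence of $x$ to an occurrence of $\overline{x}$, not to another occurrence of $x$ --- but a translation between the two occurrences of $w_k$ inside $w_{k+1}$. Writing $w_{k+1} = \overline{u}\,\delta_k\, w_\ell\, \delta_k\, u$, where $w_\ell$ is the longest palindromic prefix of $w_k$ followed by $\delta_k$ and $w_k = \overline{u}\,\delta_k\, w_\ell = w_\ell\,\delta_k\, u$ occurs both as a prefix and as a suffix of $w_{k+1}$, any occurrence of a factor inside $w_k$ crossing the old position $|w_\ell|$ reappears, via the suffix copy, shifted by $|u|+1$, hence crossing the new position $|u|+1+|w_\ell| = |w_k|$. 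This shift argument is what closes the induction, and it is absent from your proposal.
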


\begin{proof}
By the definition of palindromic closure, each palindromic prefix $v$ is equal to $w_n$ for some $n \in \mathbb N$.
We will prove the statement by mathematical induction on $n$. Let us recall that we index positions from $0$, i.e., $v=v_0 v_1\dots v_{|v|-1}$.
\begin{itemize}
    \item For $n=1$ we have $w_1=\tt 0$ and its attractor equals $\{0\}$. The longest palindromic prefix of $w_1$ followed by $\tt 0$ is equal to $w_0=\varepsilon$ and its length satisfies $|w_0|=0$.
    \item For $n\geq 2$ we assume that $w_{n-1}$ has an attractor of the form from the statement. We have $w_n=(w_{n-1}{\tt j})^{(+)}$ for some $\tt j \in \{\tt 0,1,\dots, d-1\}$. The following three situations may occur:
    \begin{enumerate}
        \item $w_n=w_{n-1}\tt j$: According to the definition of palindromic closure, this happens only for $\tt j=\tt 0$ and $w_{n-1}={\tt 0}^{\ell}$ for some $\ell \in \mathbb N, \ell \geq 1$. The longest palindromic prefix of $w_n={\tt 0}^{\ell+1}$ followed by $\tt 0$ is $w_{n-1}={\tt 0}^{\ell}$. We have thus $|w_{n-1}|=\ell$ and indeed $\{\ell\}$ is an attractor of $w_n$.
        \item $w_n=w_{n-1}{\tt j}w_{n-1}$: By the definition of palindromic closure, this happens only in case when $w_{n-1}$ contains only letters $\tt 0,\dots, j-1$. It follows from the form of $w_n$ that the longest palindromic prefix of $w_n$ followed by $\tt i$, where ${\tt i} \in \{\tt 0,\dots, j-1\}$, is the same as in $w_{n-1}$. Moreover, the longest palindromic prefix of $w_n$ followed by $\tt j$ is $w_{n-1}$. Let us explain that $\{m_{\tt 0}, \dots, m_{\tt j-1}, m_{\tt j}\}$ as defined in the statement is an attractor of $w_n$: each factor of $w_n$ either contains the letter $\tt j$, i.e., it has an occurrence containing the position $m_{\tt j}=|w_{n-1}|$, or it is contained in $w_{n-1}$, which is a prefix of $w_n$, and by induction assumption it has an occurrence containing $m_{\tt i}$ for some $\tt i \in \{\tt 0,\dots, j-1\}$.
        \item $w_n=w_{n-1}{\tt j}u$ for some $u \not =\varepsilon$ and $u \not =w_{n-1}$: Assume $w_n$ contains $k \leq d$ letters. We want to prove that $\{m_{\tt 0}, \dots, m_{\tt j}, \dots, m_{\tt k-1}\}$, as defined in the statement, is an attractor of $w_n$. Since the longest palindromic prefix of $w_n$ followed by $\tt i$, where $\tt i \in \{\tt 0,\dots, k-1\}$, $\tt i\not =\tt j$, is the same as in $w_{n-1}$, we know by induction assumption that $\{m_{\tt 0}, \dots, m'_{\tt j},\dots, m_{\tt k-1}\}$ is an attractor of $w_{n-1}$, where we replaced $m_{\tt j}$ by $m'_{\tt j}=|w_\ell|$, where $w_\ell$ is the longest palindromic prefix of $w_{n-1}$ followed by $\tt j$.
        By the definition of palindromic closure we have 
        
        \begin{equation}\label{eq:w_n}
       w_n=\underbrace{\overline{u}{\tt j}w_\ell}_{w_{n-1}}{\tt j}u=\overline{u}{\tt j}\underbrace{w_\ell{\tt j}u}_{w_{n-1}}\,.
        \end{equation}
        
       Then each factor of $w_n$ either has an occurrence containing the position $|w_{n-1}|$, i.e., crossing the second $\tt j$ in the expression~\eqref{eq:w_n}, or is entirely contained in $w_{n-1}$. In the latter case, it has an occurrence crossing the attractor of $w_{n-1}$. However, it has no occurrence containing the position $m'_{\tt j}=|w_\ell|$ because, according to~\eqref{eq:w_n}, it would then also have an occurrence containing the position $m_{\tt j}=|w_{n-1}|$. To sum up, we have proved that each factor of $w_n$ has an occurrence crossing $\{m_{\tt 0}, \dots, m_{\tt j}, \dots, m_{\tt k-1}\}$.
    \end{enumerate}
    \end{itemize}
\end{proof}
\begin{example}
Let $\Delta=({\tt 012})^{\omega}$, i.e., $\uu=\uu(\Delta)$ is the Tribonacci sequence. 
We underline the positions of the attractor from Theorem~\ref{thm:standardAR} in the first five non-empty palindromic prefixes: 
$$\begin{array}{rcll}
  w_0&=&\varepsilon \\
  w_1&=&\underline{\tt 0}\\
  w_2&=&\underline{\tt 0}\underline{\tt 1}\tt 0 \\
  w_3&=&\underline{\tt 0}\underline{\tt 1}\tt 0\underline{\tt 2}\tt 010 \\
  w_4&=&\tt 0\underline{\tt 1}\tt 0\underline{\tt 2}\tt 010\underline{\tt 0}\tt 102010\\
  w_5&=&\tt 010\underline{\tt 2}\tt 010\underline{\tt 0}\tt 102010\underline{\tt 1}\tt 020100102010\,.
\end{array}$$
Indeed,
\begin{itemize}
    \item $w_1$ contains only $\tt 0$ and its longest palindromic prefix followed by $\tt 0$ is equal to $w_0=\varepsilon$, therefore its attractor equals $\{0\}$;
    \item $w_2$ contains $\tt 0,1$ and its longest palindromic prefix followed by $\tt 0$, resp., $\tt 1$, is equal to $w_0$, resp., $w_1=\tt 0$, i.e., $\{0,1\}$ is an attractor of $w_2$;
    \item $w_3$ contains $\tt 0,1,2$ and its longest palindromic prefix followed by $\tt 0$, resp., $\tt 1$, resp., $\tt 2$, is equal to $w_0$, resp., $w_1=\tt 0$, resp., $w_2=\tt 010$, i.e., $\{0,1,3\}$ is an attractor of $w_3$;
    \item $w_4$ contains $\tt 0,1,2$ and its longest palindromic prefix followed by $\tt 0$, resp., $\tt 1$, resp., $\tt 2$, is equal to $w_3=\tt 0102010$, resp., $w_1=\tt 0$, resp., $w_2=\tt 010$, i.e., $\{7,1,3\}$ is an attractor of $w_4$;
    \item $w_5$ contains $\tt 0,1,2$ and its longest palindromic prefix followed by $\tt 0$, resp., $\tt 1$, resp., $\tt 2$, is equal to $w_3$, resp., $w_4=\tt 01020100102010$, resp., $w_2$, i.e., $\{7,14,3\}$ is an attractor of $w_5$.
\end{itemize}
In simple terms, the attractor of $w_n$ is equal to $\{|w_{n-3}|, |w_{n-2}|, |w_{n-1}|\}$ for $n \geq 3$. Moreover, it is not difficult to determine that $|w_n|=\frac{T_{n+3}+T_{n+1}-3}{2}$, where $T_{n}=T_{n-1}+T_{n-2}+T_{n-3}$ for all $n \in \mathbb N, n \geq 3$, and $T_0=0, T_1=1, T_2=1$. 
This attractor of $w_n$ is different from the one proposed in~\citep{Shallit2021}.

We can easily observe that when increasing $n$ by one, either one new element appears in the attractor or one element of the attractor changes. This holds in general for palindromic prefixes of standard episturmian sequences. 
\end{example}

\begin{remark}\label{rem:mirror}
Since $w_n$ is a palindrome, the mirror image $\overline{\Gamma}=\{|w_n|-1-m_a \ : \ a \ \text{occurs in}\  w_n\}$ of the attractor $\Gamma=\{m_a \ : \ a\ \text{occurs in}\ w_n\}$ of $w_n$ is an attractor of $w_n$, too. 
\end{remark}

\begin{theorem}\label{thm:attractorAR}
Let $\uu$ be an episturmian sequence. Each factor of $\uu$ containing $d$ distinct letters has an attractor of size $d$.
\end{theorem}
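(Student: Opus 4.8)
The plan is to derive everything from Theorem~\ref{thm:standardAR} and its mirror in Remark~\ref{rem:mirror}. First I would invoke the fact recorded after the definition of episturmian sequences: every episturmian sequence has the same language as a unique standard one, and since an attractor of a factor depends only on the factor itself, I may assume $\uu$ is standard, so that the palindromic prefixes $w_n$ and their size-$(\#\text{letters})$ attractors are at my disposal; I also record that $\uu$ is recurrent and $\LL(\uu)$ is closed under reversal. The lower bound is immediate and I would dispatch it first: the one-letter factor $a$ must have an occurrence meeting the attractor, so the attractor must contain a position carrying the letter $a$, and the $d$ distinct letters force $d$ distinct such positions. Hence any attractor of $w$ has size at least $d$, and it remains to produce one of size exactly $d$.

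For the upper bound I would realise $w$ inside a palindromic prefix $w_n$ (every factor is a factor of some $w_n$) and transport the attractor of Theorem~\ref{thm:standardAR}. The guiding observation is that an attractor position of $w_n$ carrying a letter that does \emph{not} occur in $w$ is useless for covering any non-empty factor of $w$, since such a factor would then be forced to contain that letter; consequently the $d$ positions $\{m_c : c\text{ occurs in }w\}$ already attract, \emph{within} $w_n$, every factor of $w_n$ whose letters all occur in $w$, and in particular every factor of $w$. This already pins the count at $d$. To turn this into an attractor of the standalone word $w$ I would use the two available symmetries: because $w_n$ is a palindrome both $\Gamma$ and $\overline\Gamma$ attract it (Remark~\ref{rem:mirror}), and because $\LL(\uu)$ is closed under reversal I may replace $w$ by $\overline w$; this gives freedom in which occurrence of $w$ in $w_n$, and which orientation, to work with, the aim being to select an occurrence whose window of length $|w|$ simultaneously captures one suitable attractor position for each letter.

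The hard part will be the \textbf{localisation}, and this is where a naive restriction fails: the attractor positions of $w_n$ are in general spread far apart across a window much longer than $|w|$, and even when the relevant $d$ positions lie inside an occurrence of $w$, the occurrence that witnesses the coverage of some short factor of $w$ may protrude beyond that window (one checks on small Fibonacci factors that simply intersecting $\Gamma$ or $\overline\Gamma$ with an occurrence of $w$ can leave a factor uncovered). To overcome this I would not argue by restriction but re-run the inductive mechanism of the proof of Theorem~\ref{thm:standardAR} relative to a chosen occurrence of $w$, tracking how each factor of $w$ is assigned a covering occurrence and arranging, by the self-similar structure of the palindromic-closure construction together with reversal-closure, that these covering occurrences all remain inside a single occurrence of $w$ anchored at a palindromic-prefix boundary. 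Verifying that the $d$ selected positions then cover every factor of $w$ without leaving the window—equivalently, choosing $n$, the occurrence, and the orientation so that the balance and recurrence properties specific to episturmian sequences force the coverage to stay local—is the crux of the argument; once it is established, the size is exactly $d$ by the preliminary observation, matching the lower bound.
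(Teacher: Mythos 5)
There is a genuine gap: your whole upper bound is deferred to the final ``localisation'' step, which you yourself call the crux and then do not carry out. Saying you would ``re-run the inductive mechanism \dots\ arranging \dots\ that these covering occurrences all remain inside a single occurrence of $w$'' specifies neither how $n$ and the occurrence are to be chosen nor what forces covering occurrences to stay in the window; it restates the goal rather than proving anything. Moreover, your preparatory observation (keep only the positions $m_c$ for letters $c$ occurring in $w$) yields positions of $w_n$ that need not lie inside \emph{any} occurrence of $w$, so even the ``count pinned at $d$'' is not yet a statement about the word $w$. The device the paper uses to get past this, absent from your plan, is minimality: take $n$ minimal with $w\in\mathcal{L}(w_n)$, so that, writing the step creating $w_n$ as $w_n=w_{n-1}\delta w_{n-1}$ or $w_n=u\delta w_k\delta\overline{u}$, every occurrence of $w$ in $w_n$ must contain the anchor position(s) carrying the new $\delta$ (else $w$ would sit inside $w_{n-1}$); then take the minimal palindromic prefixes $w_i,w_j$ flanking the anchors such that $w$ fits in the window $w_i\delta w_j$ (resp.\ $y\delta w_k\delta z$). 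Minimality of $j$ forces the occurrence of $w$ to contain the copy of $w_{j-1}$ plus the following letter, and since every element of the attractor $\Gamma_j$ of Theorem~\ref{thm:standardAR} is at most $|w_{j-1}|$, this is exactly what puts the candidate positions inside the occurrence of $w$.

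That said, your suspicion about protruding occurrences is well founded, and it cuts deeper than you suggest: the covering dichotomy in the paper (``either the factor has an occurrence crossing the anchor $\delta$, or it lies in $w_j$ and Theorem~\ref{thm:standardAR} covers it'') produces occurrences in $w_n$, respectively in the full copy of $w_j$, and these can leave the window of $w$. A concrete instance: $w={\tt 10100101}$ is the unique length-$8$ factor of the Fibonacci sequence absent from $w_5$; it occurs in $w_6$ exactly at positions $12$--$19$ (indexing from $0$), spanning both anchors, and the recipe gives $k=3$, $i=j=4$, $\Gamma_4=\{3,6\}$, hence the relative positions $\{4,7\}$ in $w$; but the factor ${\tt 10}$ occurs in $w$ only at $[0,1]$, $[2,3]$, $[5,6]$, none of which meets $\{4,7\}$. (The theorem itself is safe here: $\{3,5\}$ is an attractor of $w$.) So the localisation problem you isolate is real and is the actual mathematical content of the statement; a correct argument must select the positions, or the occurrence and orientation, more carefully than either your sketch or a literal transport of Theorem~\ref{thm:standardAR} does. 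Since your proposal leaves precisely this point open, it does not establish the theorem.
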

\begin{proof}
We will use the fact that  
there exists a unique directive sequence $\Delta=\delta_0\delta_1\delta_2\cdots$ such that ${\mathcal L}(\uu)={\mathcal L}(\uu(\Delta))$. 
If $d=1$, then the statement evidently holds. Consider a factor $w$ of $\uu$ containing $d \geq 2$ distinct letters. 
Let $n \in \mathbb N$ be minimal such that $w_n$ contains $w$. Either $w=w_n$, then $w$ has an attractor of size $d$ by Theorem~\ref{thm:standardAR}. Or $w \not =w_n$.
Then there are two cases to be treated:
\begin{enumerate}
\item If $w_{n-1}$ does not contain $\delta_n$, then
$$w_n=(w_{n-1}\delta_n)^{(+)}=w_{n-1}\delta_nw_{n-1}.$$ 
Then each occurrence of $w$ contains the occurrence of $\delta_n$: more precisely $|w_{n-1}|$ (otherwise $w$ would be contained in $w_{n-1}$). Consider an arbitrary occurrence of $w$ in $w_n$.
Find minimal indices $i$ and $j$, where $ 0\leq  i,j \leq n-1$, such that $w$ is contained in 
$w_i\delta_n w_j$. 
By minimality of $i$ and $j$, it is clear that $w$ contains all proper palindromic prefixes of $w_j$ together with the following letter and all proper palindromic suffixes of $w_i$ together with the preceding letter. 
Therefore on one hand, the considered occurrence of $w$ in $w_n$ contains the attractor of $w_j$ given in Theorem~\ref{thm:standardAR} shifted by $|w_{n-1}|+1$ and the mirror image of the attractor of $w_i$ given in Theorem~\ref{thm:standardAR} shifted by $|w_{n-1}|-|w_i|$ (see Remark~\ref{rem:mirror}). 
Assume WLOG $j \geq i$. On the other hand, each factor of $w$ either has an occurrence crossing $\delta_n$ in $w_n$, i.e., containing the position $|w_{n-1}|$, or it is contained in $w_j$ ($w_i$ is a prefix of $w_j$), in which case it has an occurrence crossing the attractor of $w_j$ shifted by $|w_{n-1}|+1$. Since $w$ has $d$ distinct letters, $w_j$ has $d-1$ distinct letters and the attractor of $w_j$ is of size $d-1$ by Theorem~\ref{thm:standardAR}. Altogether, it implies existence of an attractor of size $d$ for $w$.

\item If $w_{n-1}$ contains $\delta_n$, then by the definition of palindromic closure we have
\begin{equation}\label{eq:closure}
w_n=(w_{n-1}\delta_n)^{(+)}=w_{n-1}\delta_n \overline{u}=u\delta_n w_{n-1}=u\underline{\delta_n} w_k \underline{\delta_n} \overline{u},
\end{equation}
where $u$ is a non-empty word and $w_k$ is the longest palindromic prefix of $w_{n-1}$ followed by $\delta_n$ ($w_k$ may be empty). Then each occurrence of $w$ contains both underlined occurrences of $\delta_n$: more precisely $|u|$ and $|w_{n-1}|$ (otherwise $w$ would be contained in $w_{n-1}$). Consider an arbitrary occurrence of $w$ in $w_n$.
Find minimal indices $i$ and $j$, where $k < i,j \leq n-1$, such that $w$ is contained in 
$$v=\underbrace{y\underline{\delta_n} w_k}_{w_i} \underline{\delta_n} z=y\underline{\delta_n} \underbrace{w_k \underline{\delta_n} z}_{w_j}\,,$$ 
where the underlined positions correspond to the ones underlined in~\eqref{eq:closure}. 
By minimality of $i$ and $j$, it is clear that $w$ contains all proper palindromic prefixes of $w_j$ together with the following letter and all proper palindromic suffixes of $w_i$ together with the preceding letter. 
Therefore on one hand, the considered occurrence of $w$ in $w_n$ contains the attractor of $w_j$ given in Theorem~\ref{thm:standardAR} shifted by $|u|+1$ and the mirror image of the attractor of $w_i$ given in Theorem~\ref{thm:standardAR} shifted by $|w_{n-1}|-|w_i|$ (see Remark~\ref{rem:mirror}). 
Assume WLOG $j \geq i$. On the other hand, each factor of $w$ either has an occurrence crossing the second underlined $\delta_n$ in $w_n$, i.e., containing the position $|w_k|+|u|+1$, or it is contained in $w_j$ ($w_i$ is a prefix of $w_j$). Hence it has an occurrence crossing the attractor of $w_j$ shifted by $|u|+1$, which is of size $d$ by Theorem~\ref{thm:standardAR}.
\end{enumerate}
\end{proof}
Restivo et al.~\citep{Restivo2022} defined the \textit{string attractor profile function} of a sequence $\uu$
as a map $s_\uu: \mathbb N \to \mathbb N$ satisfying $s_\uu(n)=$ the size of a smallest string attractor of the prefix of length $n$ of $\uu$. By Theorem~\ref{thm:attractorAR} episturmian sequences have an eventually constant string attractor profile function.

\begin{corollary}
Let $\uu = u_0u_1u_2\cdots $ be an episturmian sequence over $\mathcal{A}$. Then $$s_\uu(n)=\#\{a \in \mathcal{A}: a \text{ occurs in } u_0u_1\cdots u_{n-1}  \}.$$ 
In particular, the  string attractor profile function is eventually constant. 
\end{corollary}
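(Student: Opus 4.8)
The plan is to deduce the corollary directly from Theorem~\ref{thm:attractorAR}, since that theorem already delivers the essential upper bound on attractor size, while the lower bound is the trivial observation already noted in the Preliminaries (every attractor must contain an occurrence of each distinct letter). First I would fix the prefix $p = u_0 u_1 \cdots u_{n-1}$ and let $d = \#\{a \in \mathcal{A} : a \text{ occurs in } p\}$ be the number of distinct letters it contains. The claim is precisely that $s_\uu(n) = d$, so I need to show both $s_\uu(n) \le d$ and $s_\uu(n) \ge d$.

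For the upper bound, note that $p$ is itself a factor of $\uu$ containing exactly $d$ distinct letters, so Theorem~\ref{thm:attractorAR} immediately guarantees that $p$ has an attractor of size $d$; hence $s_\uu(n) \le d$. For the lower bound, I would invoke the general principle stated in Section~\ref{Section_Preliminaries}: any attractor $\Gamma$ of $p$ must, for each letter $a$ occurring in $p$, contain a position covering some occurrence of the one-letter factor $a$, and distinct letters force distinct such positions. Therefore $|\Gamma| \ge d$ for every attractor $\Gamma$, giving $s_\uu(n) \ge d$. Combining the two inequalities yields $s_\uu(n) = d = \#\{a \in \mathcal{A} : a \text{ occurs in } u_0 u_1 \cdots u_{n-1}\}$, which is the stated formula.

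For the final assertion that $s_\uu$ is eventually constant, I would argue that the function $n \mapsto \#\{a \in \mathcal{A} : a \text{ occurs in } u_0 \cdots u_{n-1}\}$ is non-decreasing and bounded above by $\#\mathcal{A}$, which is finite since $\mathcal{A}$ is an alphabet. A non-decreasing integer-valued function bounded above is eventually constant; concretely, once the prefix has exhausted all letters that ever occur in $\uu$, the count no longer changes. Thus there is some $N$ beyond which $s_\uu(n)$ equals the total number of distinct letters appearing in $\uu$.

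I do not anticipate a genuine obstacle here: the content of the corollary is entirely carried by Theorem~\ref{thm:attractorAR}, and the remaining steps are the two routine bounds plus a monotonicity observation. The only point deserving a word of care is the lower bound, namely justifying that an attractor cannot use a single position to cover occurrences of two different letters — but this is immediate, since a position covers a single letter of $\uu$, so the positions witnessing distinct letters are necessarily distinct. Everything else is bookkeeping.
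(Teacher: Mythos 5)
Your proof is correct and follows exactly the route the paper intends: the corollary is stated there as an immediate consequence of Theorem~\ref{thm:attractorAR} (upper bound) combined with the observation from Section~\ref{Section_Preliminaries} that every attractor must contain an occurrence of each distinct letter (lower bound), plus the monotonicity remark for eventual constancy. You have merely written out in full the bookkeeping the paper leaves implicit.
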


\section{Acknowledgements}
I would like to thank to Edita Pelantová and Francesco Dolce for their careful reading and several improvement proposals.
This work was supported by the Ministry of Education, Youth and Sports of the Czech Republic through the project \\ CZ.02.1.01/0.0/0.0/16\_019/0000778.

\end{document}